\newtheoremstyle{mytheorem}{}{}{\itshape}{}{\bfseries}{:}{\newline}{}
\newtheoremstyle{mydefinition}{}{}{}{}{\bfseries}{:}{\newline}{}
\newtheoremstyle{myproof}{}{}{}{}{\bfseries}{:}{\newline}{#1#3}
\theoremstyle{plain}
\newtheorem{thm}{Theorem}[]
\newtheorem{cor}[thm]{Corollary}
\newtheorem{lem}[thm]{Lemma}
\newtheorem{prop}[thm]{Proposition}
\theoremstyle{definition}
\newtheorem{ex}[thm]{Example}
\newtheorem*{rmk}{Remark}
\newcommand{\Qb}{\mathbb{Q}}
\newcommand{\Qt}{\tilde{\mathbb{Q}}}
\newcommand{\Pb}{\mathbb{P}}
\newcommand{\Fg}{\mathscr{F}}
\newcommand{\hs}{\hspace{2mm}}
\newcommand{\hsl}{\hspace{1mm}}
\newcommand{\ind}{\mathbbm{1}}
\author{Simon C. Harris and Matthew I. Roberts}
\title{Measure changes with extinction}
\begin{document}

\maketitle

\subsection*{Abstract}
We consider a martingale change of measure $\Qb|_{\Fg_t}:=Z_t\Pb|_{\Fg_t}$ and clarify that in general $1/Z_t$ is only a supermartingale under $\Qb$. We then give a necessary and sufficient condition for the identity $\Pb(\exists t: Z_t=0) = \Pb(Z_\infty=0)$ to hold.

\section{Introduction}
Consider two probability measures $\Pb$ and $\Qb$ on the same filtered space $(\Omega, \Fg, \Fg_t)$ along with a c\`adl\`ag adapted non-negative process $(Z_t)$ such that, for each $t$,
\[\Qb\big|_{\Fg_t} = Z_t \Pb\big|_{\Fg_t}.\]
A simple example of such a process is to take the number of particles alive at time $t$ in some branching process, and normalize it by its expected value to give $Z_t$. The process $Z$ may be in either continuous (usually $t\in\mathbb{R}_{+}$) or discrete (usually $t\in\mathbb{Z}_{+}$) time; we shall not always distinguish between the two. It is easy to see that $Z$ is necessarily a $\Pb$-martingale. We define
\[\Upsilon:=\inf\{t\geq0 : Z_t=0\};\]
we call this the extinction time of the process $Z$.

It has been claimed, in particular in Biggins \& Kyprianou \cite{biggins_kyprianou:measure_change_multitype_branching}, Englander \& Kyprianou \cite{englander_kyprianou:local_extinction_exp_growth} and Lyons \cite{lyons:simple_path_to_biggins}, that the process $1/Z_t$ is automatically a $\Qb$-martingale. This is not always true, as shown in the example below. However, in Proposition \ref{submg_prop} we show that $1/Z_t$ is a supermartingale. Since the proofs in \cite{biggins_kyprianou:measure_change_multitype_branching}, \cite{englander_kyprianou:local_extinction_exp_growth} and \cite{lyons:simple_path_to_biggins} depend only on showing that $1/Z_t$ converges $\Qb$-almost surely, the supermartingale property is sufficient and their results are unaffected.

\begin{ex}
Consider the (discrete time) Galton-Watson process with offspring distribution $L$, where
\[\Pb(L=2)=p \hs\hs\hs \hbox{ and } \hs\hs\hs \Pb(L=0)=q.\]
Let $X_n$ be the number of particles in the $n$th generation, and set
\[m=\Pb[L]=2p \hs \hbox{ and } \hs Z_n = X_n/m^n.\]
It is well-known that $Z$ is a $\Pb$-martingale. Making the change of measure to $\Qb$, we can check immediately that
\[\Qb(Z_1 = 0) = \Pb[Z_1 \ind_{\{Z_1 = 0\}}] = 0,\]
so
\begin{eqnarray*}
\Qb[1/Z_1] &=& m \sum_{j=1}^{\infty} \Qb(X_1=j)/j = m \sum_{j=1}^{\infty} \Pb[Z_1 \ind_{\{X_1=j\}}]/j\\
&=& m(2/2m)\Pb(X_1 = 2) = \Pb(X_1 = 2) = p.
\end{eqnarray*}
Since $\Qb[1/Z_0] = 1$, we see that $(1/Z_n)$ is not a $\Qb$-martingale if $p<1$.
\end{ex}

\vspace{3mm}

\noindent
In fact we show in Lemma \ref{mainlem} that in all cases
\[\Qb[1/Z_t] = \Pb(Z_t>0) = \Pb(\Upsilon>t)\]
and in Theorem \ref{inf_id} we see that the identity
\[\Qb[1/Z_\infty] = \Pb(Z_\infty>0) = \Pb(\Upsilon = \infty)\]
holds if and only if $1/Z$ is uniformly integrable. Such results, linking the extinction of the process to the event that the martingale limit is zero, are often of great value in the branching process scenario. We stress, however, that all of our results apply to general measure changes rather than just those related to branching processes.

\section{Main results}

\subsection{The $\Qb$-supermartingale property of $1/Z$}
We may easily show that, as claimed earlier, $1/Z$ is a $\Qb$-supermartingale.

\begin{prop}\label{submg_prop}
\[\Qb\left[\left.\frac{1}{Z_{t+s}} \right| \Fg_t\right] = \frac{1}{Z_t}\Pb(Z_{t+s}>0 \hspace{1mm} |\hspace{1mm} \Fg_t).\]
In particular, $(1/Z_t)$ is a $\Qb$-supermartingale.
\end{prop}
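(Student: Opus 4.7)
The plan is to apply the abstract Bayes formula for conditional expectations under a change of measure to the random variable $1/Z_{t+s}$, after taking care of the $\Qb$-null set on which it is not defined.

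First I would observe that the set $\{Z_{t+s}=0\}$ is $\Qb$-null: indeed, for any $A\in\Fg_{t+s}$ we have $\Qb(A)=\Pb[Z_{t+s}\ind_A]$, and taking $A=\{Z_{t+s}=0\}$ gives $\Qb(Z_{t+s}=0)=0$. The same argument shows $\Qb(Z_t=0)=0$. Thus both $1/Z_{t+s}$ and $1/Z_t$ are well-defined $\Qb$-a.s., and it is legitimate to interpret $1/Z_{t+s}$ as $\ind_{\{Z_{t+s}>0\}}/Z_{t+s}$ throughout the computation.

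Next I would invoke the standard change-of-measure formula for conditional expectations: for any $\Fg_{t+s}$-measurable $Y\geq 0$,
\[\Qb[Y\mid\Fg_t] = \frac{1}{Z_t}\,\Pb[YZ_{t+s}\mid\Fg_t] \qquad \Qb\text{-a.s.}\]
Applying this with $Y=\ind_{\{Z_{t+s}>0\}}/Z_{t+s}$ makes the $Z_{t+s}$ in the denominator cancel the $Z_{t+s}$ coming from the Radon--Nikodym derivative, so the right-hand side collapses to $\Pb(Z_{t+s}>0\mid\Fg_t)/Z_t$, which is the desired identity. The $\Qb$-supermartingale property then falls out immediately since $\Pb(Z_{t+s}>0\mid\Fg_t)\leq 1$, giving $\Qb[1/Z_{t+s}\mid\Fg_t]\leq 1/Z_t$.

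The only subtlety I anticipate is bookkeeping on the set $\{Z_t=0\}$: under $\Pb$ this set has positive probability in general, and $Z_{t+s}=0$ on $\{Z_t=0\}$ $\Pb$-a.s.\ because a non-negative martingale absorbed at $0$, so both sides of the claimed identity are of the form $0/0$ there. However, since the identity is only asserted $\Qb$-a.s.\ and $\{Z_t=0\}$ is $\Qb$-null, this causes no difficulty provided one states the equality modulo $\Qb$-null sets. That is really the only conceptual point; the rest is a one-line application of Bayes.
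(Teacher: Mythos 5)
Your proposal is correct and amounts to the same argument as the paper's: the abstract Bayes formula you invoke is exactly what the paper verifies by hand (integrating both sides against $\ind_A$ for $A\in\Fg_t$), and the key cancellation $Z_{t+s}\cdot\ind_{\{Z_{t+s}>0\}}/Z_{t+s}=\ind_{\{Z_{t+s}>0\}}$ together with the observation that $\{Z_t=0\}$ and $\{Z_{t+s}=0\}$ are $\Qb$-null is identical in both. The only implicit ingredient worth making explicit is that the denominator in Bayes is $\Pb[Z_{t+s}\mid\Fg_t]$, which equals $Z_t$ by the $\Pb$-martingale property of $Z$.
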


\begin{proof}
First, note that there is no extinction under $\Qb$: for all $t>0$,
\[\Qb(Z_t=0) = \Pb[Z_t \ind_{Z_t=0}] = 0.\]
Also, there is no rebirth after extinction; that is, for all $s,t>0$,
\[Z_t = 0 \Rightarrow Z_{t+s} = 0 \hs \hs \hbox{(a.s. under $\Pb$).}\]
This fact can be shown directly, using the martingale property of $Z$; however, the measure change allows us a simple proof:
\[\Pb(Z_{t+s}>0, Z_t=0) = \Pb\left[\frac{Z_{t+s}}{Z_{t+s}}\ind_{Z_{t+s}>0, Z_t=0}\right] = \Qb\left[\frac{1}{Z_{t+s}}\ind_{Z_{t+s}>0, Z_t=0}\right] = 0,\]
since $\Qb(Z_t = 0) = 0$.
Using these two facts, we see that for any $A\in\Fg_t$,
\begin{eqnarray*}
\Qb\left[\frac{1}{Z_t}\Pb(Z_{t+s}>0 | \Fg_t)\ind_A\right] &=& \Qb\left[\frac{1}{Z_t}\ind_{\{Z_t>0\}}\Pb(Z_{t+s}>0 | \Fg_t)\ind_A\right]\\
&=& \Pb\left[\frac{Z_t}{Z_t}\ind_{\{Z_t>0\}}\Pb(Z_{t+s}>0|\Fg_t)\ind_A\right]\\
&=& \Pb(Z_t>0, \hsl Z_{t+s}>0, \hsl A) \hs = \hs \Pb(Z_{t+s}>0, \hsl A)\\
&=& \Pb\left[\frac{Z_{t+s}}{Z_{t+s}}\ind_{\{Z_{t+s}>0\}}\ind_A\right] \hs = \hs \Qb\left[\frac{1}{Z_{t+s}}\ind_A\right].
\end{eqnarray*}
Thus, by definition of conditional expectation,
\[\Qb\left[\left.\frac{1}{Z_{t+s}}\right|\Fg_t\right] = \frac{1}{Z_t}\Pb(Z_{t+s}>0 | \Fg_t).\qedhere\]
\end{proof}

\begin{rmk}
Kuhlbusch \cite{kuhlbusch:weighted_bps_random_environment} gives a very similar proof of this fact, albeit in discrete time only. The proof above also has the advantage that it gives an explicit formula for the rate at which the process is decaying.
\end{rmk}

\begin{cor}
$(1/Z_t)$ is a true $\Qb$-martingale if and only if there is no extinction under $\Pb$. \label{mgextcor}
\end{cor}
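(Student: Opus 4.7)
The plan is to leverage Proposition \ref{submg_prop} directly: it already gives the exact size of the gap in the supermartingale inequality, so a martingale characterization is essentially a matter of determining when that gap vanishes.

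First I would recall the standard fact that a non-negative $\Qb$-supermartingale $(M_t)$ with $M_0=1$ is a true martingale if and only if $\Qb[M_t]=1$ for every $t\ge 0$. Applied to $M_t=1/Z_t$, this reduces the corollary to checking constancy of $\Qb[1/Z_t]$. Next I would invoke the identity $\Qb[1/Z_t]=\Pb(Z_t>0)=\Pb(\Upsilon>t)$ announced in the introduction (Lemma \ref{mainlem}); note it also drops out of Proposition \ref{submg_prop} by taking the $A=\Omega$ unconditional expectation and using $\Qb[Y/Z_t]=\Pb[Y\ind_{\{Z_t>0\}}]$ for $\Fg_t$-measurable $Y\ge 0$, together with the no-rebirth property proved inside that proposition. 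With this identity in hand, constancy of $\Qb[1/Z_t]$ in $t$ is equivalent to $\Pb(\Upsilon>t)=1$ for every $t$, which is equivalent to $\Pb(\Upsilon=\infty)=1$, i.e. no $\Pb$-extinction.

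For the converse direction I would argue directly from the Proposition: if $\Pb(\Upsilon=\infty)=1$ then $\Pb(Z_{t+s}>0\,|\,\Fg_t)=1$ $\Pb$-a.s., and hence also $\Qb$-a.s., so the formula
\[\Qb\!\left[\left.\tfrac{1}{Z_{t+s}}\right|\Fg_t\right]=\tfrac{1}{Z_t}\Pb(Z_{t+s}>0\,|\,\Fg_t)\]
collapses to $1/Z_t$, giving the martingale property.

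I do not anticipate a real obstacle: everything needed has either been established in Proposition \ref{submg_prop} or is announced as Lemma \ref{mainlem}. The only point that requires a word of care is that $1/Z_t$ is to be read as $+\infty$ (or as zero times $\infty$ appropriately) on $\{Z_t=0\}$, but under $\Qb$ that event is null, so no ambiguity arises and all the manipulations above take place on the $\Qb$-almost sure event $\{Z_t>0\text{ for all rational }t\}$.
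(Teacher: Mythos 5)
Your argument is correct and is essentially the reading the paper intends: the corollary is stated without proof as an immediate consequence of Proposition \ref{submg_prop}, and your two directions (martingale $\Rightarrow$ $\Qb[1/Z_t]=\Pb(Z_t>0)$ constant $\Rightarrow$ no extinction; no extinction $\Rightarrow$ the conditional-expectation formula collapses to $1/Z_t$) are exactly the natural fleshing-out of that. The only implicit normalization, which the paper also makes, is $Z_0=1$ so that $\Qb[1/Z_0]=1$.
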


\subsection{Extinction probabilities}

In work on branching processes, extinction probabilities often cause difficulties. For example, let $\Upsilon$ be the extinction time,
\[\Upsilon := \inf \{t: Z_t=0\}\]
and set
\[Z_{\infty}:=\limsup_{t\to\infty}Z_t;\]
then it can be a major problem to prove that
\begin{equation}\label{equality}
\Pb(Z_\infty>0) = \Pb(\Upsilon=\infty).
\end{equation}
In this section we give an identity that has already proved useful for this purpose (see \cite{harris_roberts:unscaled_growth}) and a necessary and sufficient condition for (\ref{equality}) to hold. We begin by stating a well-known result -- a proof can be found, for example, in Durrett \cite{durrett:prob_theory_examples} (Theorem 3.3).

\begin{lem}\label{meas_lem}
Set
\[Z_{\infty}:=\limsup_{t\to\infty}Z_t.\]
Then for $A\in\Fg$,
\[\Qb(A)=\Pb[Z_{\infty}\ind_A] + \Qb(A\cap\{Z_{\infty}=\infty\}).\]
\end{lem}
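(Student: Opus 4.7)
The statement is a Lebesgue-type decomposition of $\Qb$ with respect to $\Pb$, so the plan is to identify $Z_\infty$ as the density of the absolutely continuous part of $\Qb$ and to show that the remaining singular part is concentrated on $\{Z_\infty = \infty\}$. First I would apply the martingale convergence theorem to the non-negative $\Pb$-martingale $(Z_t)$: this yields $Z_t \to Z_\infty$ $\Pb$-almost surely with $Z_\infty < \infty$ $\Pb$-almost surely, so $\{Z_\infty = \infty\}$ is $\Pb$-null and the target identity reduces to
\[\Qb(A \cap \{Z_\infty < \infty\}) = \Pb[Z_\infty \ind_A] \qquad \text{for all } A \in \Fg.\]

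To prove this identity on the algebra $\bigcup_t \Fg_t$ I would truncate. Fix $A \in \Fg_s$ and, for $K > 0$ and $t \geq s$, set $B_K^{(t)} := \{\sup_{u \leq t} Z_u \leq K\} \in \Fg_t$, which decreases in $t$ to $B_K := \{\sup_u Z_u \leq K\}$. From $\Qb|_{\Fg_t} = Z_t\,\Pb|_{\Fg_t}$,
\[\Qb(A \cap B_K^{(t)}) = \Pb\bigl[Z_t\,\ind_{A \cap B_K^{(t)}}\bigr],\]
and on $B_K^{(t)}$ the integrand is bounded by $K$. Sending $t \to \infty$, the left side converges by continuity of $\Qb$, and on the right side dominated convergence (dominated by $K\,\ind_A$) applies because $\Pb$-a.s.\ $Z_t\,\ind_{B_K^{(t)}} \to Z_\infty\,\ind_{B_K}$: on $B_K$ all the indicators equal $1$ and $Z_t \to Z_\infty$, while off $B_K$ some $Z_u$ eventually exceeds $K$ so $\ind_{B_K^{(t)}}$ vanishes for large $t$. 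This gives $\Qb(A \cap B_K) = \Pb[Z_\infty\,\ind_{A \cap B_K}]$, and letting $K \to \infty$ using $B_K \uparrow \{Z_\infty < \infty\}$ (valid since $\limsup_t Z_t < \infty$ iff $\sup_t Z_t < \infty$) together with monotone convergence on each side finishes the identity on the algebra.

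The extension from $\bigcup_t \Fg_t$ to $\Fg = \sigma(\bigcup_t \Fg_t)$ follows from a standard monotone class argument, since both sides are finite measures in $A$ agreeing on a generating $\pi$-system. I expect the main technical obstacle to be carrying out the $t \to \infty$ step cleanly, namely the pointwise identification of $\lim_t Z_t\,\ind_{B_K^{(t)}}$ by splitting according to whether $\omega$ lies in $B_K$ or not, together with the measurability of $\sup_{u \leq t} Z_u$, which in continuous time is where the càdlàg assumption on $Z$ is used.
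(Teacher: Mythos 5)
Your argument is correct, and it is worth noting that the paper does not actually prove this lemma: it is quoted as a known result with a pointer to Durrett. The textbook proof there introduces the auxiliary measure $(\Pb+\Qb)/2$, under which the two relative densities are \emph{bounded} martingales, deduces their a.s.\ and $L^1$ convergence, and reads off the Lebesgue decomposition from the limiting densities; it is also stated in discrete time, whereas the paper applies it in continuous time without comment. Your route is genuinely different and self-contained: you truncate by the events $\{\sup_{u\le t}Z_u\le K\}$ rather than by a dominating measure, which keeps everything inside the original pair $(\Pb,\Qb)$ and handles the continuous-time case directly, with the c\`adl\`ag hypothesis doing exactly the work you identify (measurability of running suprema, local boundedness of paths so that $\{\limsup_t Z_t<\infty\}=\{\sup_t Z_t<\infty\}$ pathwise --- a point you genuinely need on the $\Qb$ side, where no convergence of $Z_t$ is available). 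The individual limit interchanges all check out: continuity from above of the finite measure $\Qb$ along $B_K^{(t)}\downarrow B_K$, dominated convergence under $\Pb$ with dominating constant $K$, and monotone convergence in $K$. Two small points you should make explicit in a final write-up: $\Pb[Z_\infty]\le 1$ by Fatou, so that $A\mapsto\Pb[Z_\infty\ind_A]$ really is a finite measure for the $\pi$--$\lambda$ step; and the identity can only be asserted for $A\in\sigma(\bigcup_t\Fg_t)$, so your reading $\Fg=\Fg_\infty$ is the implicit hypothesis under which the paper's statement ``for $A\in\Fg$'' makes sense. Neither is a gap in substance.
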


\noindent
We may now easily prove the following identity. Despite its simplicity, it can be extremely useful -- for example it is an essential ingredient in the proofs of \cite{harris_roberts:unscaled_growth}.

\begin{lem}\label{mainlem}
For any $t\in[0,\infty)$,
\[\Pb(\Upsilon>t) = \Pb(Z_t>0) = \Qb[1/Z_t];\]
also
\[\Pb(Z_\infty > 0) = \Qb[1/Z_\infty].\]
\end{lem}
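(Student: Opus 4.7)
The plan is to dispatch the three equalities one at a time, using only the measure-change identity and the "no-rebirth" fact established in the proof of Proposition \ref{submg_prop}.

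\emph{First equality, $\Pb(\Upsilon>t)=\Pb(Z_t>0)$.} Since $Z$ has c\`adl\`ag paths and, by the no-rebirth observation in the proof of Proposition \ref{submg_prop}, $\Pb$-a.s.\ the event $\{Z_s=0\}$ propagates to all later times, the two events $\{\Upsilon>t\}$ and $\{Z_t>0\}$ differ only by a $\Pb$-null set. So this step is essentially a restatement.

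\emph{Second equality, $\Pb(Z_t>0)=\Qb[1/Z_t]$.} We already know from Proposition \ref{submg_prop} (its opening line) that $\Qb(Z_t=0)=0$, so $1/Z_t=(1/Z_t)\ind_{\{Z_t>0\}}$ $\Qb$-a.s. Apply the change of measure at time $t$:
\[\Qb\bigl[1/Z_t\bigr] = \Qb\bigl[(1/Z_t)\ind_{\{Z_t>0\}}\bigr] = \Pb\bigl[Z_t\cdot(1/Z_t)\ind_{\{Z_t>0\}}\bigr] = \Pb(Z_t>0).\]

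\emph{Third equality, $\Pb(Z_\infty>0)=\Qb[1/Z_\infty]$.} Here the change of measure no longer holds on $\Fg_\infty$; instead I would invoke Lemma \ref{meas_lem}, whose natural functional form is $\Qb[g]=\Pb[Z_\infty g]+\Qb[g\ind_{\{Z_\infty=\infty\}}]$ for any nonnegative measurable $g$. Take $g=1/Z_\infty$. The singular term vanishes because $1/Z_\infty=0$ on $\{Z_\infty=\infty\}$. For the absolutely continuous term, recall that $Z$ is a nonnegative $\Pb$-martingale, hence $Z_\infty<\infty$ $\Pb$-a.s., so $Z_\infty\cdot(1/Z_\infty)=\ind_{\{Z_\infty>0\}}$ $\Pb$-a.s., giving $\Pb[Z_\infty/Z_\infty]=\Pb(Z_\infty>0)$.

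The only mild obstacle is the bookkeeping around $0\cdot\infty$ and $\infty\cdot 0$ on the exceptional sets $\{Z_\infty=0\}$ and $\{Z_\infty=\infty\}$; the key observations that dissolve this are that $\{Z_\infty=\infty\}$ is $\Pb$-null (nonnegative $\Pb$-martingale), while on $\{Z_\infty=\infty\}$ the integrand $1/Z_\infty$ equals zero so the singular piece in Lemma \ref{meas_lem} is harmless. Both finite-$t$ equalities are essentially immediate once the no-rebirth property is in hand, and the infinite-$t$ equality is a one-line application of Lemma \ref{meas_lem}.
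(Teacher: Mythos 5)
Your proposal is correct and follows essentially the same route as the paper: the finite-$t$ identity by restricting to $\{Z_t>0\}$ (where $\Qb$ lives) and applying the change of measure, and the $t=\infty$ identity via Lemma \ref{meas_lem} with the singular term killed because $1/Z_\infty=0$ on $\{Z_\infty=\infty\}$. You are in fact slightly more explicit than the paper about why $\Pb(\Upsilon>t)=\Pb(Z_t>0)$, invoking the no-rebirth property, which the paper leaves implicit.
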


\begin{proof}
Using various facts from earlier,
\[\Qb[1/Z_t] = \Qb\left[\frac{1}{Z_t} \ind_{\{Z_t>0\}}\right] = \Pb\left[Z_t\frac{1}{Z_t} \ind_{\{Z_t>0\}}\right] = \Pb(Z_t>0)\]
which establishes the first equality. For the second, we use Lemma \ref{meas_lem}. Note that
\[\Qb(Z_{\infty} = 0) = \Pb[Z_{\infty}\ind_{\{Z_{\infty}=0\}}] + \Qb(\{Z_{\infty}=0\}\cap\{Z_{\infty}=\infty\}) = 0.\]
Thus, using Lemma \ref{meas_lem} again,
\[\Qb[1/Z_{\infty}] = \Qb\left[\frac{1}{Z_{\infty}}\ind_{\{Z_{\infty}>0\}}\right] = \Pb(Z_{\infty}>0) + \Qb\left[\frac{1}{Z_{\infty}}\ind_{\{Z_{\infty}=\infty\}}\right] = \Pb(Z_{\infty}>0).\qedhere\]
\end{proof}

\noindent
This allows us to give a simple necessary and sufficient condition for (\ref{equality}) to hold.

\begin{thm}\label{inf_id}
The full identity
\[\Qb[1/Z_{\infty}] = \Pb(Z_{\infty}>0) = \Pb(\Upsilon = \infty)\]
holds if and only if the set $\{1/Z_t: t\geq0\}$ is $\Qb$-uniformly integrable.
\end{thm}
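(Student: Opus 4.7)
The plan is to exploit the $\Qb$-supermartingale property of $(1/Z_t)$ from Proposition~\ref{submg_prop} together with the standard Vitali/Scheff\'e characterisation of uniform integrability. Since Lemma~\ref{mainlem} already provides $\Qb[1/Z_\infty] = \Pb(Z_\infty > 0)$ with no extra hypothesis, the real content of this theorem is the equivalence between $\Qb$-uniform integrability of $\{1/Z_t\}$ and the equality $\Pb(Z_\infty > 0) = \Pb(\Upsilon = \infty)$.

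The first step is to pin down the $\Qb$-almost sure limit of $1/Z_t$. By Proposition~\ref{submg_prop}, $(1/Z_t)$ is a non-negative $\Qb$-supermartingale and so converges $\Qb$-almost surely to some limit $W \in [0,\infty)$. I would then identify $W = 1/Z_\infty$ $\Qb$-a.s., with the convention $1/\infty := 0$, by an elementary case split: on $\{W>0\}$ we have $Z_t \to 1/W$ and so $\limsup_t Z_t = 1/W$, giving $W = 1/Z_\infty$; on $\{W=0\}$ we have $Z_t \to \infty$, so $Z_\infty = \infty$ and again $W = 0 = 1/Z_\infty$.

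With the limit identified, Lemma~\ref{mainlem} yields $\Qb[1/Z_t] = \Pb(\Upsilon > t) \downarrow \Pb(\Upsilon = \infty)$ as $t \to \infty$, and also $\Qb[1/Z_\infty] = \Pb(Z_\infty > 0)$. For a $\Qb$-a.s.\ convergent family of non-negative random variables, the Vitali--Scheff\'e equivalence says that uniform integrability is equivalent to convergence of expectations to the expectation of the almost sure limit. Applied to $1/Z_t \to 1/Z_\infty$ under $\Qb$, this tells us that $\{1/Z_t : t \geq 0\}$ is $\Qb$-uniformly integrable if and only if $\Pb(\Upsilon = \infty) = \Pb(Z_\infty > 0)$, which is precisely the remaining equality of the full identity. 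The only step that needs any real thought is identifying $W$ with $1/Z_\infty$, and this is settled by the elementary case split above; beyond that the argument is a direct application of the earlier lemmas, so I do not anticipate any serious obstacle.
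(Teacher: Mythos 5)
Your proof is correct and follows essentially the same route as the paper: both arguments rest on the $\Qb$-almost sure convergence of $1/Z_t$ to $1/Z_\infty$ combined with the Scheff\'e/Vitali equivalence between uniform integrability and convergence of expectations, fed through Lemma \ref{mainlem}. The only difference is cosmetic --- you make explicit the identification of the supermartingale limit with $1/Z_\infty$, which the paper leaves implicit.
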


\begin{proof}
If $\{1/Z_t: t>0\}$ is $\Qb$-uniformly integrable then we have immediately that
\[\Pb(Z_{\infty}>0) = \Qb[1/Z_{\infty}] = \lim_{t\to\infty}\Qb[1/Z_t] = \lim_{t\to\infty}\Pb(\Upsilon>t) = \Pb(\Upsilon=\infty).\]
Conversely, if $\Pb(Z_{\infty}>0) = \Pb(\Upsilon=\infty)$, then as above we have
\[\Qb[1/Z_{\infty}] = \lim_{t\to\infty}\Qb[1/Z_t].\]
Thus (by Scheff\'e's lemma -- Theorem 5.10 of \cite{williams:probability_with_martingales}) $1/Z_t$ converges in $L^1$ to $1/Z_\infty$. Convergence in $L^1$ then implies uniform integrability (see Theorem 13.7 of \cite{williams:probability_with_martingales} for example); hence \mbox{$\{1/Z_t: t\geq0\}$} is $\Qb$-uniformly integrable.
\end{proof}

\section{The $\Qb$-local martingale property}
We may now ask whether $1/Z$ is even a $\Qb$-local martingale. The intuition is that if, as is often the case, $Z_t$ is some suitable rescaling of the number of particles alive at time $t$, then $1/Z_t$ is perfectly well-behaved under $\Qb$: there is always at least one particle alive, so $Z_t$ cannot get within a certain distance of zero. Thus $1/Z_t$ can only be a local martingale if it is a true martingale; but it is not a true martingale, and thus not a local martingale.

This notion is made precise in Proposition \ref{localmg} below. The result is really just a rephrasing of a standard fact about local martingales, which we state in Lemma \ref{stdmg}; we give a proof of Proposition \ref{localmg} regardless.

\begin{lem}\label{stdmg}
Suppose that $(X_t, t\geq0)$ is a local martingale. Then the following are equivalent:
\begin{itemize}
\item $X$ is a martingale;
\item For each $t>0$, $\{X_T: \hsl T \hbox{ is a stopping time, } T\leq t\}$ is uniformly integrable.
\end{itemize}
\end{lem}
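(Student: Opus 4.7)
The plan is to prove the two implications separately: the forward direction will follow from the optional stopping theorem, and the converse from the definition of a local martingale via a localizing sequence.

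For the forward direction, assume $X$ is a true martingale. For any stopping time $T \leq t$, the optional stopping theorem gives $X_T = \Pb[X_t \mid \Fg_T]$. A standard fact (for example Theorem 13.4 of \cite{williams:probability_with_martingales}) says that the family of conditional expectations of any fixed integrable random variable against arbitrary sub-$\sigma$-algebras is uniformly integrable; applied with the integrable random variable $X_t$, this immediately yields the uniform integrability of $\{X_T : T \leq t \text{ a stopping time}\}$.

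For the converse, I would choose a localizing sequence $T_n \uparrow \infty$ such that each stopped process $X^{T_n} = (X_{T_n \wedge u})_{u\geq0}$ is a true martingale. Fix $0 \leq s \leq t$ and apply the martingale property of $X^{T_n}$ to write
\[\Pb[X_{T_n \wedge t} \mid \Fg_s] = X_{T_n \wedge s}.\]
Both $T_n \wedge s$ and $T_n \wedge t$ are stopping times bounded by $t$, so they lie in the family whose uniform integrability is assumed. Since $T_n \uparrow \infty$ almost surely, we have the a.s.\ convergences $X_{T_n \wedge t} \to X_t$ and $X_{T_n \wedge s} \to X_s$; combining these with UI (by Vitali's theorem) upgrades them to $L^1$-convergence. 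Passing to the limit in the displayed identity, and using that $L^1$-limits commute with conditional expectation, yields $\Pb[X_t \mid \Fg_s] = X_s$, which is the martingale property.

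The main obstacle is really just a piece of bookkeeping: one must verify that the stopping times $T_n \wedge s$ and $T_n \wedge t$ genuinely belong to the family hypothesised to be uniformly integrable (they do, being bounded by $t$), and then invoke the standard machinery linking a.s.\ convergence, uniform integrability, and $L^1$-convergence under conditional expectation. Neither step is substantive, so I expect the proof to be short and routine.
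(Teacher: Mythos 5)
Your proof is correct, and the paper offers no proof of its own to compare against: it explicitly describes Lemma \ref{stdmg} as ``a standard fact about local martingales'' and leaves it uncited in detail. Your argument (optional stopping plus the uniform integrability of $\{\Pb[X_t\mid\Gg]\}$ for the forward direction; Vitali's theorem applied along a localizing sequence, using that conditional expectation is an $L^1$-contraction, for the converse) is exactly the standard argument the authors are implicitly relying on, and the only points needing care --- that $T_n\wedge s$ and $T_n\wedge t$ lie in the hypothesised family, and that the c\`adl\`ag assumption justifies optional stopping at bounded stopping times --- are ones you have addressed.
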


\begin{prop}\label{localmg}
Suppose that extinction occurs with positive probability under $\Pb$, i.e. there exists $s>0$ such that $\Pb(Z_s=0)>0$, and that the set 
\[\{1/Z_T: \hsl T \hbox{ is a stopping time, } T\leq t\}\]
is $\Qb$-UI for each $t>0$. Then $1/Z_t$ is not a local martingale under $\Qb$.
\end{prop}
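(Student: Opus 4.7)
The plan is to argue by contradiction, leveraging the two results already in hand: the abstract criterion in Lemma \ref{stdmg} and the explicit characterisation in Corollary \ref{mgextcor}. Suppose, for contradiction, that $(1/Z_t)$ is a $\Qb$-local martingale. The hypothesis gives exactly the second bullet of Lemma \ref{stdmg} under the measure $\Qb$, so that lemma applied on the filtered probability space $(\Omega,\Fg,\Fg_t,\Qb)$ would immediately promote $(1/Z_t)$ to a true $\Qb$-martingale.

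However, Corollary \ref{mgextcor} asserts that $(1/Z_t)$ is a true $\Qb$-martingale if and only if there is no $\Pb$-extinction, i.e.\ $\Pb(Z_s=0)=0$ for every $s\geq 0$. This directly contradicts the standing assumption that there exists some $s>0$ with $\Pb(Z_s=0)>0$. The assumed local martingale property therefore cannot hold.

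In this sense the proposition is essentially a repackaging of Lemma \ref{stdmg} together with Corollary \ref{mgextcor}; the only step requiring even a moment's thought is verifying that Lemma \ref{stdmg} applies under $\Qb$, which is immediate since that lemma is stated for an arbitrary filtered probability space and the UI hypothesis is phrased with respect to $\Qb$. There is no genuine obstacle: the interest of the statement lies in its interpretation (explaining why, in the branching picture sketched in the introduction to this section, one should not expect $1/Z$ to be even locally a martingale) rather than in the derivation itself.
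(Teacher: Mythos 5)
Your proof is correct and follows essentially the same route as the paper: assume $1/Z$ is a $\Qb$-local martingale, use the uniform integrability hypothesis to upgrade it to a true $\Qb$-martingale, and contradict Corollary \ref{mgextcor}. The only difference is that the paper writes out the relevant direction of Lemma \ref{stdmg} explicitly (introducing a reducing sequence $(T_n)$, applying optional stopping, and passing to the limit via UI) rather than citing the lemma, which is a purely expository choice the authors themselves flag by calling the proposition ``really just a rephrasing'' of that standard fact.
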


\begin{proof}
For a contradiction, suppose that $1/Z_t$ is a local martingale under $\Qb$, with a reducing sequence of stopping times $(T_n, n\geq 0)$. Then for any bounded stopping time $T\leq t$, say,
\[\Qb[1/Z_0] = \Qb[1/Z_0^{T_n}] = \Qb[1/Z_T^{T_n}] = \Qb[1/Z_{T\wedge T_n}],\]
where the second equality holds by the optional stopping theorem. Now by hypothesis $\{Z_{T\wedge T_n} : n\geq 0\}$ is UI and thus
\[\Qb[1/Z_{T\wedge T_n}] \to \Qb[1/Z_T] \hs \hbox{ as } \hs n \to \infty.\]
So $\Qb[1/Z_T] = \Qb[1/Z_0]$ for all bounded stopping times $T$, and hence by optional stopping $1/Z_t$ is a true $\Qb$-martingale. We have already shown that this is not true when there is a positive probability of extinction (Corollary \ref{mgextcor}); hence by contradiction $1/Z_t$ is not a $\Qb$-local martingale.
\end{proof}

\begin{ex}
Consider a standard branching Brownian motion, where each particle gives birth at rate $r$ to $L$ new particles with $\Pb[L]=m\in(0,\infty)$. Let $N(t)$ be the set of particles at time $t$, with particle $u$ having position $X_u(t)$. Then it is well-known that
\[Z_\lambda(t):=\sum_{u\in N_t} e^{-mrt + \lambda X_u(t) - \lambda^2 t/2}\]
is a martingale. Suppose that $\Pb(L=0)>0$. Then, making the usual change of measure to $\Qb$, we know that $1/Z_\lambda$ is not a $\Qb$-martingale. It is possible to show that it is not even a local martingale, by using the spine interpretation of the measure change -- details of this can be found in \cite{hardy_harris:new_spine_formulation} and we give only a vague explanation here. We embellish our probability space with extra information concerning one distinguished infinite line of descent, called the spine, and define a new measure $\Qt$ which is an extension of $\Qb$. Under $\Qt$ the spine moves with a drift $\lambda$, and the birth rate along the spine is also altered. The spine slmost surely survives forever under $\Qt$, and we denote its position at time $t$ by $\xi_t$. Thus almost surely under $\Qt$, for a bounded stopping time $T\leq t$ say,
\begin{eqnarray*}
\frac{1}{Z_\lambda(T)} &=& \frac{1}{\sum_{u\in N(T)} e^{-rT + \lambda X_u(T) - \lambda^2 T/2}}\\
&\leq& \frac{1}{e^{-rT + \lambda \xi_T - \lambda^2 T/2}}\\
&=& e^{(r+\lambda^2)T} \cdot e^{-\lambda(\xi_T-\lambda T) - \lambda^2 T/2}\\
&\leq& e^{(r+\lambda^2)t} \cdot e^{-\lambda(\xi_T-\lambda T) - \lambda^2 T/2}.
\end{eqnarray*}
Since $(e^{-\lambda(\xi_t-\lambda t) - \lambda^2 t/2}, t\geq 0)$ is a martingale under $\Qt$ (because $\xi$ is a Brownian motion with drift $\lambda$), by Lemma \ref{stdmg} the set
\[\{e^{-\lambda(\xi_T-\lambda T) -\lambda^2 T/2} : \hsl T \hbox{ is a stopping time, } T\leq t\}\]
is $\Qt$-uniformly integrable. Multiplying each element of the set by a constant $e^{(r+\lambda^2)t}$ does not change this property, and hence by domination
\[\{1/Z_\lambda(T): \hsl T \hbox{ is a stopping time, } T\leq t\}\]
is uniformly integrable under $\Qt$ (and so under $\Qb$). Proposition \ref{localmg} now tells us that $1/Z_\lambda$ is not a local martingale under $\Qb$.
\end{ex}


\begin{thebibliography}{00}

\bibitem{biggins_kyprianou:measure_change_multitype_branching}
Biggins, J.D. and Kyprianou, A.E., 2004. Measure change in multitype branching. \emph{Adv. in Appl. Probab.}, 36(2), 544-581.

\bibitem{durrett:prob_theory_examples}
Durrett, R., 2004. Probability: theory and examples. Duxbury Press, Belmont, CA, 3rd ed.

\bibitem{englander_kyprianou:local_extinction_exp_growth}
Engl{\"a}nder, J. and Kyprianou, A. E., 2004. Local extinction versus local exponential growth for spatial branching processes. \emph{Ann. Probab.}, 32(1A), 78-99.

\bibitem{hardy_harris:new_spine_formulation}
Hardy, R. and Harris, S. C., 2008. A new formulation of the spine approach to branching diffusions. Preprint, \texttt{http://arxiv.org/abs/math/0611054}.

\bibitem{harris_roberts:unscaled_growth}
Harris, S. C. and Roberts, M. I., 2008. Branching Brownian motion: almost sure growth along unscaled paths. Preprint, \texttt{http://arxiv.org/abs/0811.1704}.

\bibitem{kuhlbusch:weighted_bps_random_environment}
Kuhlbusch, D., 2004. On weighted branching processes in random environment. \emph{Stochastic Process. Appl.}, 109(1), 113-144.

\bibitem{lyons:simple_path_to_biggins}
Lyons, R., 1997. A simple path to Biggins' martingale convergence for branching random walk, in: K.B. Athreya and P. Jagers, eds. Classical and modern branching processes. Springer, New York, pp. 217-221.

\bibitem{williams:probability_with_martingales}
Williams, D., 1991. Probability with Martingales. Cambridge University Press, Cambridge, UK.

\end{thebibliography}
\end{document}